\newtheorem{theorem}{Theorem}
\newtheorem{example}[theorem]{Example}
\newtheorem{algorithm}[theorem]{Algorithm}
\newtheorem{definition}[theorem]{Definition}
\newtheorem{lemma}[theorem]{Lemma}
\begin{document}
\begin{center}{\Large Desingularization of function fields}\end{center}
\begin{center}{\large Douglas A. Leonard\\
                      Department of Mathematics and Statistics\\
                      Auburn University}\end{center}

\begin{abstract}
  This is a self-contained purely algebraic treatment of desingularization
  of fields of fractions $\mathbf{L}:=Q(\mathbf{A})$ of $d$-dimensional domains of the form
  \[\mathbf{A}:=\overline{\mathbf{F}}[\underline{x}]/\langle b(\underline{x})\rangle\]
  with a purely algebraic objective of uniquely describing $d$-dimensional valuations
  in terms of $d$ explicit (independent) local parameters and $1$ (dependent) local unit,
  for arbitrary dimension $d$ and arbitrary characteristic $p$.
  
  The desingularization will be given as a rooted tree with nodes labelled by domains $\mathbf{A}_k$
  (all with field of fractions $Q(\mathbf{A}_k)=\mathbf{L}$),
  sets $EQ_k$ and $INEQ_k$ of equality constraints and inequality constraints,
  and birational change-of-variables maps on $\mathbf{L}$.

  The approach is based on d-dimensional discrete valuations and local monomial orderings
  to emphasize formal Laurent series expansions in $d$ independent variables.
  It is non-standard in its notation and perspective.
\end{abstract}

\newpage
\section{Introduction}
Since this is a non-standard purely algebraic perspective of desingularization,
we limit our references to standard methods to
Cutkosky \cite{Cut}, Hartshorne \cite{Hart}, Harris \cite{Harr},
Kollar \cite{Koll}, and the WIKI page for {\em resolution of singularities},
as gateways to the standard literature available.

We shall start with the basic setup of a d-dimensional algebraic function field,
give a generalization of discrete valuations to d-dimensional discrete valuations,
use them to define coordinate values,
and then describe the algebraic objectives of desingularization
as we see them from this perspective, namely
to uniquely coordinatize valuations 
and describe them in terms of $d$ explicit (independent) local parameters
and a (dependent) local unit.

To produce such output, we'll use local (so series-based) monomial orderings,
initial monomials relative to such,
minimal weight sequences,
unimodular matrices, and unimodular transformations,
to get birational change-of-variables maps between various domains of the common function field.

Then we'll put this all in the form of a rooted desingularization tree.
Each node represents a domain of the function field together with equality and inequality constraints defining a part
of a partition. Arcs from it are birational change-of-variables maps that either refine the partition or the local
description of valuations on that part.
Sets of valuations are repartitioned until the valuations in each part have been uniquely coordinatized and have
$d$ explicit independent local parameters and a local unit described
by an irreducible polynomial in {\em strongly resolved form} as defined below.
This is summarized in the arc algorithm section at the end, outlining
how to move from the node label $(b_k,EQ_k,INEQ_k)$ using birational maps  $(\phi_{k,l},\psi_{l,k})$
describing a birational change-of-variables to a new node label $(b_l,EQ_l,INEQ_l)$.
[The idea of using both equality constraints and inequality constraints to define disjoint parts
(rather than just equality constraints to define varieties) was introduced in \cite{Leon}
in the context of elimination and extension to describe varieties.
The use of coordinates coming from $\mathbf{P}^1(\overline{\mathbf{F}})$ was also introduced there.]

Along the way we'll talk about global parameters, reduction, strongly resolved forms, and other useful algebraic concepts.
But we'll try not to use any terminology that would suggest geometrical, topological, or analytical content,
so as to avoid nonproductive or counterproductive preconceptions based on such ideas.

\newpage
\section{Motivational example}
It is extremely important to have a small but non-trivial example
to motivate this perspective, the new notation, and the algebraic objectives.
After all, if we cannot fully understand what can be expected in such examples,
there is no need to proceed further.

Our basic motivational example starts with the irreducible polynomial
\[b:=x_0^3+x_0x_1+x_1^5\]
defining the domain $A:=\overline{\mathbf{F}}[x_0,x_1]/\langle b\rangle$
and its field of fractions $\mathbf{L}:=Q(A)$.

There are {\em Weil divisors}
\[((x_0))=-5\cdot P_1+1\cdot P_2+4\cdot P_3, \quad
  ((x_1))=-3\cdot P_1+2\cdot P_2+1\cdot P_3\]
describing the zeros and poles of the functions $x_0,x_1\in\mathbf{L}$.
These suggest what formal Laurent series expansions for $x_0$ and $x_1$ should look like,
namely that the coefficient corresponds to the leading (so smallest) exponent in the series.

Given these divisors, a best result is of the form
\[ x_0=t_1^{-5}u_1^{-2},\ x_1=t_1^{-3}u_1^{-1},\ b_1:=1+u_1+t_1^7u_1^3\]
\[ x_0=t_2,\ x_1=t_2^2u_2,\ b_2:=1+u_2+t_2^7u_2^5\]
\[ x_0=t_3^4u_3,\ x_1=t_3,\ b_3:=1+u_3+t_3^7u_3^3\]
with $t_i$ a local parameter, $u_i$ a local unit and $b_i(t_i,u_i)=0$
the induced irreducible polynomial relation at $P_i$, whatever $P_i$ is.
Elsewhere, so for $(a_0,a_1)\in\overline{\mathbf{F}}^2\backslash\{(0,0)\}$
and $a_0^3+a_0a_1+a_1^5=0$,
\[ x_i=a_i+t,\ x_j=a_j+t^ku\]
for $\{i,j\}=\{0,1\}$, $k\in\{1,2\}$, $t$ a local parameter, $u$ a local unit,
and some induced irreducible polynomial relation $\overline{b}(t,u)=0$.

So the questions to be asked are how do we produce this result for this example,
then for examples in dimension $d=1$ in general,
and finally for examples in arbitrary dimension $d$ (and arbitrary characteristic $p$)?

\newpage
\section {The algebraic function field $\mathbf{L}$}

Start with $\mathbf{F}$ denoting either the {\em rational field} $\mathbf{Q}$ in characteristic $0$
or the {\em finite field} $\ \mathbf{F}_p$ in positive prime characteristic $p$,
with $\overline{\mathbf{F}}$ denoting the {\em algebraic closure}.
Let $b(x_0,\ldots,x_d)\in\overline{\mathbf{F}}[x_0,\ldots,x_d]$
be an {\em irreducible polynomial}, and assume the ideal $\mathbf{I}(x_0,\ldots,x_d)$
that it generates is the ideal of all relations among the variables.
The corresponding {\em quotient ring}
\[ \mathbf{A}:=\overline{\mathbf{F}}[x_0,\ldots,x_d]/\langle b(x_0,\ldots,x_d)\rangle\]
is then a {\em domain}.
Let its {\em field of fractions} $Q(\mathbf{A})$ be denoted by $\mathbf{L}$
and be called an {\em algebraic function field}.

Before proceeding further, 
it is necessary to make a basic assumption about $\mathbf{L}$
that is to be our universal object,
namely that it is a field in the {\em algebraic} sense that 
if $f\in\mathbf{L}\backslash\{ 0\}$, then
not only is $f^{-1}\in\mathbf{L}\backslash\{ 0\}$, 
but $f\cdot f^{-1}\equiv 1\in\mathbf{L}$.
Why should this be an obvious requirement? 
There is little use forcing $\mathbf{A}$ to be a domain 
so as to have no zero-divisors and hence cancellation, 
if its field of fractions does not have cancellation as well.

Both $\mathbf{A}$ and $\mathbf{L}$ will be said to have {\em dimension} $d$,
in that the ideal of all relations $I(x_1,\ldots,x_d)=\{ 0\}$ 
but the ideal of relations $I(x_0,\ldots,x_d)\neq\{ 0\}$.
Note here that for any finite subset of elements of $\mathbf{L}$
there is an ideal of all the induced relations among those elements,
but that this is {\em independent of any computation that produces it}.

We wish to talk about values of functions, but rather than evaluating
$f\in\mathbf{F}$ by writing it as a quotient $g/h$ of two polynomials
$g,h\in\overline{\mathbf{F}}[x_0,\ldots,x_d]$ and evaluating them independently,
we'll be relying on the valuations defined below to produce a value.
So as not to have to write $\underline{\nu}(f)$ for the valuation of $f$ at $\underline{\nu}$
and $f(\underline{\nu})$ for the value of  $f$ at $\underline{\nu}$,
we'll think of values as coordinates and write $coord(f,\underline{\nu})$.

[Note, not surprisingly, that function fields of dimension $d=1$ are markedly different from those of higher dimension
in that all functions $f\in\mathbf{L}$ can be evaluated at all valuations $\nu$.
Even in the simplest example for $d=2$, namely $\mathbf{L}:=\overline{\mathbf{F}}(x_0,x_1)$,
there is a valuation at which $coord(x_0)=0=coord(x_1)$ at which $x_2:=x_1/x_0$ can't be evaluated;
and a valuation at which $coord(x_2)=0=coord(x_1)$ at which $x_0$ can't be evaluated.]

\newpage
\section{Coordinate systems}
In our motivational example above, were we to limit ourselves to affine coordinates
(usually written $A^n$ but here written $\overline{\mathbf{F}}^n$),
we could not coordinatize $P_1$.

The standard fix for this is to use projective coordinates
(elements of $\mathbf{P}^n(\overline{\mathbf{F}})$).
Of course these are not coordinates in the same sense,
but rather, at best, ratios of such.
They can be used to define an affine cover so that affine coordinates
can be used in each of the $n+1$ affine charts.
They are gotten by replacing $x_i$ by $X_i/H$ and homogenizing equations.
The $n+1$ charts are then gotten by setting one of the $X_i$ or $H$ equal to $1$. 

We'll argue instead that the natural fix is to homogenize variables
by replacing $x_i$ symbolically by $g_i/h_i$ to get multi-homogeneous equations.
It should be the case that $coord(g/h)=\alpha/\beta$ if $coord(h/g)=\beta/\alpha$, even for $\beta=0$ and $\alpha\neq 0$,
so $coord(x_i)\in\mathbf{P}^1(\overline{\mathbf{F}})$ is the natural generalization of $\overline{\mathbf{F}}$
and hence  $(\mathbf{P}^1(\overline{\mathbf{F}}))^n$ is the natural generalization of $\overline{\mathbf{F}}^n$ in this context.

This allows us to view functions as truly independent.
Consider $\mathbf{L}:=\overline{\mathbf{F}}(x_0,x_1)$ used above.
If $x_0,x_1$ are truly independent, and allowed to take on the value $1/0$,
then rewriting this as $\mathbf{L}:=\overline{\mathbf{F}}(X_0/H,X_1/H)$
makes it hard to have $coord(x_0)=1/0$ and $coord(x_1)=1/1$.
This is not the case for $\mathbf{L}:=\overline{\mathbf{F}}(g_0/h_0,g_1/h_1)$
in that it is trivial to let $coord(g_0/h_0)=1/0$ and $coord(g_1/h_1)=1/1$.

[Later we may comment on other drawbacks of projective space.
Suffice it to say that  $\mathbf{P}^n(\overline{\mathbf{F}})$
will play no role in our theory for any $n>1$.]

\newpage
Start the desingularization of the motivational example with the multi-homogeneous polynomial
\[b^*(g_0,h_0,g_1,h_1):=g_0^3h_1^5+g_0h_0^2g_1h_1^4+h_0^3g_1^5\]
shorthand for $2^{d+1}=4$ irreducibles:
\begin{enumerate}
   \item \[b_0\left(\frac{g_0}{h_0},\frac{g_1}{h_1}\right)=\left(\frac{g_0}{h_0}\right)\left(\frac{g_1}{h_1}\right)+\left(\frac{g_0}{h_0}\right)^3+\left(\frac{g_1}{h_1}\right)^5\]
   \item \[b_1\left(\frac{g_0}{h_0},\frac{h_1}{g_1}\right)=1+\left(\frac{g_0}{h_0}\right)\left(\frac{h_1}{g_1}\right)^4+\left(\frac{g_0}{h_0}\right)^3\left(\frac{h_1}{g_1}\right)^5\]
   \item \[b_2\left(\frac{h_0}{g_0},\frac{g_1}{h_1}\right)=1+\left(\frac{h_0}{g_0}\right)^2\left(\frac{g_1}{h_1}\right)+\left(\frac{h_0}{g_0}\right)^3\left(\frac{g_1}{h_1}\right)^5\]
   \item \[b_3\left(\frac{h_0}{g_0},\frac{h_1}{g_1}\right)=\left(\frac{h_0}{g_0}\right)^3+\left(\frac{h_1}{g_1}\right)^5+\left(\frac{h_0}{g_0}\right)^2\left(\frac{h_1}{g_1}\right)^4\]
\end{enumerate}    

Instead of using an affine cover of overlapping sets, partition the multi-homogenous variety $V^*(b^*)$ into (disjoint) parts
\begin{enumerate}
   \item \[ S_0=\left\{ ((a_0:1),(a_1:1))\in(\mathbf{P}^1(\overline{\mathbf{F}}))^2\ :\ a_0^3+a_0a_1+a_1^5=0 \right\}\]
   \item \[S_1=\left\{ ((a_0:1),(1:0))\in(\mathbf{P}^1(\overline{\mathbf{F}}))^2\ :\ a_0^30^5+a_00^4+1=0\right\}=\emptyset\]
   \item \[S_2=\left\{ ((1:0),(a_1:1))\in(\mathbf{P}^1(\overline{\mathbf{F}}))^2\ :\ 1+0^2a_1^4+0^3a_1^5=0\right\}=\emptyset\]
   \item \[S_3=\left\{ ((1:0),(1:0))\in(\mathbf{P}^1(\overline{\mathbf{F}}))^2\ :\ 0^5+0^20^4+0^3=0\right\}=\{ (1/0,1/0)\}\]
\end{enumerate}
and deal with each of the non-empty affine problems produced separately.

In general start the desingularization tree with root node labelled by the multi-homogeneous polynomial
\[ b^*(g_0,h_0,\ldots,g_d,h_d)\]
and produce arcs to the related affine polynomial subproblems
$0\leq K< 2^{d+1}$, by writing $K$ in binary as
\[ K=\sum_{j=0}^dK_j2^j,\ K\in\{ 0,1\}\]
labelling the arcs with birational change-of-variables maps
$(\phi_K,\psi_K)$ defined by
\[ \psi_K(x_{K,j}):=(g_j/h_j)^{(-1)^{K_j}},\ \phi_K(g_j/h_j):=x_{K,j}^{(-1)^{K_j}}.\]
The corresponding irreducibles are then $b_K(x_{K,0},\ldots,x_{K,d})$,
with affine sets
\[EQ(K):=\{ (a_{K,0},\ldots a_{K,d})\in\overline{\mathbf{F}}^{d+1}\ :\ a_{K,j}=0\mbox{ if }K_j=1\mbox{ and }b_K(a_{K,0},\ldots a_{K,d})=0\}\]
and $INEQ(K):=\emptyset$ defining a part.

\newpage    
\section{d-dimensional valuations}
Valuations are usually defined as 1-dimensional maps;
at $\nu$, $\nu(f)>0$ is interpreted as $f$ having a zero of this order,
$\nu(f)=0$, interpreted as $f$ being a unit, and $\nu(f)<0$, intepreted
as $f$ having a pole of order $-\nu(f)$.
If this is thought of in terms of the leading exponent of a formal Laurent series expansion,
consider the following d-dimensional generalization instead
(even though a d-dimensional formal Laurent series may not have a leading exponent).
This is a crucial object in this algebraic approach.

\begin{definition}[d-dimensional valuations] 

The map $\underline{\nu}\ :\ \mathbf{L}\backslash\{ 0\}\to\mathbf{Z}^d$ 
is a {\em valuation} of a d-dimensional algebraic function field, iff it satisfies:
\begin{enumerate}
\item $\underline{\nu}(c)=\underline{0}$ for $c\in\overline{\mathbf{F}}\backslash\{ 0\}$;
\item $\underline{\nu}(f_1f_2)=\underline{\nu}(f_1)+\underline{\nu}(f_2)$;
\item if $\nu_i(f_1)< \nu_i(f_2)$, then
 $\nu_i(f_1-f_2)=\nu_i(f_1)$;
\item if  $f_2\neq f_1$ but $\nu_i(f_2)=\nu_i(f_1)$, then
 there exists $u\in\mathbf{L}$ such that $f_1=uf_2$ and $\nu_i(1-u)>0$;
\item there exist $d$ {\em local parameters} with $\nu_i(t_j)=\delta_{i,j}$
for $1\leq i,j\leq d$ $($so they are {\em independent}$)$.
\end{enumerate}
\end{definition}

[Warning: One could define a local ring or a discrete valuation ring as
\[\mathbf{O}_{\underline{\nu}}:=\{ 0\}\cup\{ f\in\mathbf{L}\ :\ \underline{\nu}(f)\succeq \underline{0}\}\]
with unique maximal ideal
\[m:=\langle t_1,\ldots, t_d\rangle;\]
but $f\in m$  does not imply that $\underline{\nu}(f)\succ \underline{0}\}$,
only that the individual terms of $f$ have that property.]
   
\newpage
\section{Coordinates from valuations}
This is also a crucial concept in this algebraic approach!
To motivate evaluation (that is a coordinate value) 
of a function $f\in\mathbf{L}$ at a valuation,
consider that when $d=1$ 
if $t,u_1,u_2\in\mathbf{A}$ with $t(P)=0$, $u_i(P)=c_i\neq 0$,
then for $f:=(t^{a_1}u_1)/(t^{a_2}u_2)$ it makes sense for
\[ f(P):=\begin{cases}0/1\text{ if }a_1>a_2\\
                      c_1/c_2\text{ if }a_1=a_2\\
                      1/0\text{ if }a_1<a_2\end{cases}\]
meaning that $f$ has a zero of order $a_1-a_2$ at $P$, 
is a unit at $P$, or has a pole of order $a_2-a_1$ at $P$ respectively, 
rather than just giving up on evaluating $f$ at $P$ when $a_1,a_2>0$.

The {\em coordinate} $coord(f,\underline{\nu})$ will be based on the valuation $\underline{\nu}$ 
rather than by writing $f=g/h$ and evaluating $g(P),h(P)$ as polynomials and using $f(P)=g(P)/h(P)$.  

\begin{definition}[Coordinates]
 
\[coord(\ ,\underline{\nu})\ :\ \mathbf{L}\to \mathbf{P}^1(\overline{\mathbf{F}})\]
is defined by: 
\[coord(f,\underline{\nu}):=\begin{cases}
    c/1\text{ if there is a unique  $c\in\overline{\mathbf{F}}$ such that }
    f-c\in\langle t_1,\ldots,t_d\rangle;\\
    1/0\text{ if } coord(f^{-1},\underline{\nu})=0/1;\\
     \text{ undefined otherwise.}\end{cases}\]
A {\em function} $f$ $($so element of $\mathbf{L})$
 is {\em regular} at a valuation $\underline{\nu}$
 iff $coord(f,\underline{\nu})$ is defined.
\end{definition}

[Again, this is supposed to agree with the standard method of thinking of the elements of $\mathbf{L}$ 
as quotients $g/h$ of elements $g,h\in\mathbf{A}$ 
and finding coordinate values for both $g$ and $h$ to get one for $g/h$.
But it works as well when $g(P)=1$ and $h(P)=0$ as well as sometimes when $g(P)=0=h(P)$.]

Also realize that this idea of a function $f$ regular at a valuation $\underline{\nu}$
has {\em nothing} to do with $f$ considered anywhere except at that valuation.
[This is more palatable if one thinks about working in positive characteristic,
where the temptation to think non-algebraically is not so strong.]

\newpage
The purely algebraic goal of {\em desingularization} here is then to find {\em coordinate functions} 
such that

\begin{itemize}
\item  
\[(coord(x_{k,0},\underline{\nu}),\ldots,coord(x_{k,d},\underline{\nu}))
=(coord(x_{k,0},\underline{\mu}),\ldots,coord(x_{k,d},\underline{\mu}))\]
iff $\underline{\mu}\equiv\underline{\nu}$; 
\item and that there are $d$ independent, {\em explicit local parameters} $t_{j,\underline{\nu}}$
with $\nu_i(t_{j,\underline{\nu}})=\delta_{i,j}$,
each of the form 
\[x_{k,j}-coord(x_{k,j},\underline{\nu}),\ 1\leq j\leq d.\]
\end{itemize}

\newpage
\section{Global parameters and reduction}

The best possible result of desingularization of $\mathbf{L}$ is
to have $\mathbf{L}=\overline{\mathbf{F}}(t_1,\ldots,t_d)$
and a {\em global birational parameterization} $(\Phi,\Psi)$
describing the given variables in terms of these new independent variables and vice versa.

[As a warning there may be several ways to do this,
as in the example $b:=wx-yz=0$,
wherein any variable can be solved for in terms of the others.]

The next best possible result is to find a domain
\[ A_k:=\overline{\mathbf{F}}[x_{k,0},\ldots, x_{k,d}]/
  \langle b_k(x_{k,0},\ldots,x_{k,m})\rangle\]
for some $m<d$ so that $x_{k,m+1},\ldots,x_{k,d}$
are identified as {\em global parameters}.
This at least reduces the desingularization problem to one in dimension $m<d$.
Surprisingly many examples in the literature have at least one global variable, if not $d$ such.
\begin{theorem} 

If $b(x_0,\ldots,x_d)=f_1(x_{0},\ldots,x_{d-1})-x_df_2(x_{0},\ldots,x_{d-1})$, 
then $x_d$ is merely a variable name for  
$f_1(x_{0},\ldots,x_{d-1})/f_2(x_{0},\ldots,x_{d-1})\in\mathbf{L}$.
So $\mathbf{L}=\overline{\mathbf{F}}(x_{0},\ldots,x_{d-1})$.
\end{theorem}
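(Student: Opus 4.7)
The plan is to read the defining relation $b(x_0,\ldots,x_d)=0$ inside $\mathbf{A}$ as the explicit identity $x_d\,f_2(x_0,\ldots,x_{d-1}) = f_1(x_0,\ldots,x_{d-1})$, and then, working in the field $\mathbf{L}=Q(\mathbf{A})$, to divide both sides by $f_2$ and conclude that $x_d$ is a rational function of $x_0,\ldots,x_{d-1}$. Two small things must be checked: that the division is legal (i.e.\ that $f_2$ is a nonzero element of $\mathbf{A}$, hence a unit in $\mathbf{L}$), and that the rewritten equality forces $\mathbf{L}=\overline{\mathbf{F}}(x_0,\ldots,x_{d-1})$.

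For the first point I would argue that the polynomial $f_2(x_0,\ldots,x_{d-1})$ does not lie in the ideal $\langle b\rangle$ of $\overline{\mathbf{F}}[x_0,\ldots,x_d]$. View everything as polynomials in $x_d$ with coefficients in $\overline{\mathbf{F}}[x_0,\ldots,x_{d-1}]$: then $b$ has $x_d$-degree exactly $1$, with leading coefficient $-f_2$ (the degenerate case $f_2\equiv 0$ is excluded, since then $b$ does not involve $x_d$ at all and the conclusion $x_d=f_1/f_2$ would be meaningless). Any nonzero polynomial multiple of $b$ therefore has $x_d$-degree at least $1$, while $f_2$ has $x_d$-degree $0$; consequently $f_2\notin\langle b\rangle$ and so $f_2\neq 0$ in $\mathbf{A}$.

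Once $f_2$ is a nonzero element of the domain $\mathbf{A}$, it is invertible in $\mathbf{L}$, and the relation $f_1=x_d\,f_2$ in $\mathbf{A}$ yields the identity $x_d=f_1/f_2\in\overline{\mathbf{F}}(x_0,\ldots,x_{d-1})$ in $\mathbf{L}$. Since $\mathbf{L}$ is generated over $\overline{\mathbf{F}}$ by $x_0,\ldots,x_d$ and $x_d$ already lies in the subfield $\overline{\mathbf{F}}(x_0,\ldots,x_{d-1})\subseteq\mathbf{L}$, the two fields coincide: $\mathbf{L}=\overline{\mathbf{F}}(x_0,\ldots,x_{d-1})$, and $x_d$ is indeed just a name for the rational function $f_1/f_2$.

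The only real obstacle is the nonvanishing of $f_2$ in $\mathbf{A}$, but because $b$ is linear in $x_d$ this reduces to a one-line degree comparison; no deeper use of the irreducibility of $b$ or of the full structure of the ideal of relations is required.
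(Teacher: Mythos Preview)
Your argument is correct and is exactly the unpacking the paper has in mind: the author does not give a separate proof at all, remarking only that ``the proof [is] essentially given by the statement.'' Your version supplies the one detail the paper leaves implicit, namely that $f_2\neq 0$ in $\mathbf{A}$ (your $x_d$-degree comparison), so that the division $x_d=f_1/f_2$ is legitimate in $\mathbf{L}$; after that the conclusion is immediate, just as the paper asserts.
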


This would seem to be an observation more than a theorem,
with the proof essentially given by the statement.
Nevertheless there are examples in the literature of this form.
Kollar 3.3.4 is the {\em quadratic cone} $uv-w^2=0$, so $u=w^2/v$.
Kollar 3.6.1 is the {\em pinch point} or {\em Whitney umbrella} $x^2-y^2z=0$, so $z=x^2/y^2$.
[Admittedly these examples were given to exemplify some aspects of standard theory that are irrelevant to the theory here.]

\begin{theorem}
If
\[b(x_0,\ldots,x_d)=\sum_{j=0}^mx_d^{m-j}g^j(x_{0},\ldots,x_{d-1})f_j(x_0,\ldots,x_d),\]
then the birational change-of-variables defined by
\[\phi(x_d):=y_dg(x_{0},\ldots,x_{d-1}),\quad \psi(y_d):=x_d/g(x_{0},\ldots x_{d-1})\]
produces a {\em reduction}
\[red(b)(x_{0},\ldots,x_{d-1},y_d):=\sum_{j=0}^my_d^{m-j}f_j(x_0,\ldots,x_{d-1},y_dg(x_{0},\ldots,x_{d-1})).\]
\end{theorem}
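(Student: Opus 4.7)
The plan is a direct substitution calculation; the only conceptual point is to use that $(\phi,\psi)$ is a legitimate birational change-of-variables on $\mathbf{L}$ rather than a polynomial one. First I would verify birationality: since $g(x_0,\ldots,x_{d-1})$ is a nonzero element of the domain $\mathbf{A}$, it is a unit in $\mathbf{L}$, so $\psi(y_d)=x_d/g$ makes sense in $\mathbf{L}$. Both $\phi$ and $\psi$ fix $x_0,\ldots,x_{d-1}$, and on the remaining generator one computes $\phi(\psi(y_d))=\phi(x_d/g)=(y_d g)/g=y_d$ and $\psi(\phi(x_d))=\psi(y_d g)=(x_d/g)\cdot g=x_d$, so they extend to mutually inverse automorphisms of $\mathbf{L}$.

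Then I would apply $\phi$ termwise to the hypothesized decomposition of $b$. The $j$-th summand $x_d^{m-j}g^j f_j$ becomes
\[ (y_d g)^{m-j}\cdot g^j\cdot f_j(x_0,\ldots,x_{d-1},y_d g)=y_d^{m-j}\,g^m\,f_j(x_0,\ldots,x_{d-1},y_d g),\]
where the key observation is that the $m-j$ copies of $g$ contributed by expanding $x_d^{m-j}$ combine with the pre-existing $g^j$ to give exactly $g^m$, independent of $j$. This is precisely the bookkeeping reason the coefficient of $x_d^{m-j}$ was required to carry a factor of $g^j$ in the hypothesis. Summing over $j$ and factoring out the common $g^m$ gives $\phi(b)=g^m\cdot red(b)$.

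Finally, because $g^m$ is a unit in $\mathbf{L}$, the relation $\phi(b)=0$ is equivalent to $red(b)=0$ in the new variables, so the image domain $\overline{\mathbf{F}}[x_0,\ldots,x_{d-1},y_d]/\langle red(b)\rangle$ has the same field of fractions $\mathbf{L}$ as the original. There is really no obstacle beyond tracking the exponents of $g$; the mild caveat worth flagging is that $red(b)$ may fail to be irreducible if $g$ shares a factor with some $f_j$, in which case one would pass to the relevant irreducible factor that still generates the kernel of the induced map, but this is not part of the assertion being proved.
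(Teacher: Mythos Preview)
Your proof is correct; the direct substitution showing $\phi(b)=g^m\cdot red(b)$ is exactly the computation underlying the statement. The paper itself offers no proof here, remarking that ``whether this is considered a theorem, definition, or observation'' it is essentially self-evident, so your write-up supplies the details the paper leaves implicit.
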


Whether this is considered a theorem, definition, or observation,
there are examples in the literature that could be simplified with just this.
A generalization of Kollar 3.6.2 $b:=x_0^2+x_1^2+x_2^{2m_1+r_1}x_3^{2m_2+r_2}=0$ for $r_i\in\{0,1\}$
together with an obvious variant of theorem 2 gives
$\phi(x_0):=y_0x_2^{m_1}x_3^{m_0}$, $\phi(x_1):=y_1x_2^{m_1}x_3^{m_0}$, and
$\phi(b)=x_2^{2m_1}x_3^{2m_0}red(b)$, with $red(b):=y_0^2+y_1^2+x_2^{r_1}x_3^{r_0}$.
Then if either $r_1=1$ or $r_0=1$, $x_2$ or $x_3$ can be solved for, using theorem 1.
If both are $0$, then this can be parameterized, as in the Eisenbud example below.
So this is globally parameterizable in all cases.

\begin{theorem}

If there are weights $w_i\in\mathbf{N}$ for each $x_i$ such that $weight(\underline{x}^{\underline{\alpha}})=w$
is the same for all $\underline{x}^{\underline{\alpha}}$ in the support of $b$,
then there is at least one global parameter.
\end{theorem}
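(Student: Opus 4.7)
The plan is to exploit the scaling action $x_i\mapsto t^{w_i}x_i$ on $\mathbf{A}$, under which weighted-homogeneity makes $b$ a semi-invariant: $b(t^{w_0}x_0,\ldots,t^{w_d}x_d)=t^w\,b(x_0,\ldots,x_d)$. A suitable Laurent-monomial change of variables in $\mathbf{L}$ will split off a weight-$1$ ``scale coordinate'' $s$ from weight-$0$ quotient variables $z_i:=x_i/s^{w_i}$; the $z_i$ will lie in the scale-invariant subfield of $\mathbf{L}$ (transcendence degree $d-1$), and $s$ will be transcendental over them, giving the desired global parameter. (We may assume the common weight $w$ of $b$ is positive; otherwise $b$ would contain no variable $x_i$ of positive weight and those variables would already be global parameters.)

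To construct $s$, first normalize so that $\gcd(w_0,\ldots,w_d)=1$ (dividing all weights by their common factor preserves homogeneity). By B\'ezout, choose integers $a_0,\ldots,a_d$ with $\sum_{i} a_iw_i=1$, and set
\[
s:=\prod_{i=0}^{d}x_i^{a_i}\in\mathbf{L},\qquad z_i:=x_i/s^{w_i}\in\mathbf{L}\quad(i=0,\ldots,d).
\]
The assignments $\phi(x_i):=s^{w_i}z_i$ and $\psi(s):=\prod_j x_j^{a_j}$, $\psi(z_i):=x_i/s^{w_i}$ then define a birational change of variables between the given description of $\mathbf{L}$ and a new description in terms of $s,z_0,\ldots,z_d$.

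Two relations in the new variables must then be verified. Weighted-homogeneity gives
\[
0=b(x_0,\ldots,x_d)=b(s^{w_0}z_0,\ldots,s^{w_d}z_d)=s^w\,b(z_0,\ldots,z_d),
\]
hence $b(z_0,\ldots,z_d)=0$. A direct monomial computation
\[
\prod_{j}z_j^{a_j}=\frac{\prod_j x_j^{a_j}}{\prod_j s^{w_j a_j}}=\frac{s}{s^{\sum_j a_j w_j}}=\frac{s}{s}=1
\]
produces an additional binomial relation. Since $b(Z_0,\ldots,Z_d)$ is irreducible and weighted-homogeneous of weight $w\geq 1$, while $\prod Z_j^{a_j}-1$ decomposes into two weighted-homogeneous components of distinct weights $0$ and $1$, $b$ cannot divide $\prod Z^{a}-1$ in $\overline{\mathbf{F}}[Z_0,\ldots,Z_d]$; thus the two relations are algebraically independent and force $\mathrm{trdeg}_{\overline{\mathbf{F}}}\overline{\mathbf{F}}(z_0,\ldots,z_d)\leq d-1$. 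Combined with $\mathbf{L}=\overline{\mathbf{F}}(z_0,\ldots,z_d)(s)$ of transcendence degree $d$, this forces $s$ to be transcendental over $\overline{\mathbf{F}}(z_0,\ldots,z_d)$ and hence a global parameter, reducing the remaining problem to dimension $m=d-1$.

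The main obstacle is bookkeeping around the Laurent-monomial nature of $\phi$ and $\psi$: because $s$ and the $z_i$ involve negative exponents in the $x_i$, one must carefully handle the loci in the desingularization tree where some $x_i$ vanishes, likely via separate arcs. One should also use the binomial relation $\prod z^{a}=1$ to solve explicitly for one of the $z_j$ in terms of the others, so that the $(d-1)$-dimensional subproblem is presented as a single irreducible polynomial relation in $d$ variables, matching the ambient framework used elsewhere in the paper.
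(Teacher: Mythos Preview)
Your argument is correct and rests on the same core idea as the paper's one-line proof: exploit weighted homogeneity via a Laurent-monomial change of variables. The paper carries this out more directly, though. Rather than introducing $d+1$ weight-zero quotients $z_i=x_i/s^{w_i}$ and then invoking a transcendence-degree count to show one of them is redundant, the paper completes the normalized weight vector $(w_0,\ldots,w_d)$ to a full unimodular integer matrix $M$ (via the extended Euclidean algorithm, as in the unimodular-transformation section), so that the monomial substitution $\phi(x_i)=\prod_j z_j^{M_{i,j}}$ immediately yields $\phi(b)=z_d^{\,w}\,b_1(z_0,\ldots,z_{d-1})$: one scale variable and exactly $d$ weight-zero coordinates, with no extra relation to eliminate. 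Your B\'ezout vector $(a_0,\ldots,a_d)$ is precisely one row of $M^{-1}$; completing it to the full matrix would produce the remaining $d$ weight-zero monomials directly, bypassing both the binomial relation $\prod z_j^{a_j}=1$ and the slightly delicate divisibility step (which, as written, treats $\prod Z_j^{a_j}-1$ as a polynomial even though some $a_j$ may be negative---you should clear denominators before speaking of $b$ dividing it). What your route buys is that it makes the scaling action on $\mathbf{L}$ explicit and does not presuppose the unimodular-completion machinery; what the paper's route buys is that it lands immediately in the single-relation-in-$d$-variables form the rest of the framework expects, without the final elimination step you flag in your last paragraph.
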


{\bf Proof} There is a birational change-of-variables map induced by the weights (see the unimodular section below)
such that $\phi(b)=z_d^wb_1(z_{0},\ldots,z_{d-1})$.

Again this would seem to be a self-evident theorem, but there are examples in the literature on which this could be used.

Cutkosky's exercise 7.35.1 is an example cited from Narasimhan with $b:=x_0^2+x_1x_2^3+x_2x_3^3+x_1^7x_3=0$.
Even the weights $(32,7,19,15)$ are given.
The change of variables $\phi(x_0):=z_3^{32}z_2^4z_1$, $\phi(x_1):=z_3^{7}z_2$, $\phi(x_2):=z_3^{19}z_2^2z_0$, and $\phi(x_3):=z_3^{15}z_2^2$
produces $\phi(b)=z_3^{64}b_1$ for $b_1:=z_2^2+z_2z_1^2+z_2z_0+z_0^3$ in any characteristic, not just $p=2$ as in the example.

Cutkosky's exercise 7.35.2 is an example cited from Hauser with $b:=x_0^2+x_1^4x_2+x_1^2x_2^4+x_2^7=0$.
There are implicit weights $(7,3,2)$ not given there.
The change of variables $\phi(x_0):=z_2^{7}z_1^3$, $\phi(x_1):=z_2^3z_1z_0$, and $\phi(x_2):=z_2^2z_1$, produces $\phi(b)=z_2^{14}z_1^5b_1$
for $b_1:=z_1+z_0^4+z_1z_0^2+z_1^2$ in any characteristic, not just $p=2$ as in the example.

The simple example in Eisenbud $b:=x_0^2+x_1^2+x_2^2=0$ has an obvious weight function $(1,1,1)$.
Letting $\phi(x_0):=z_2$, $\phi(x_1):=z_2z_1$, and $\phi(x_2):=z_2z_0$ produces $\phi(b)=z_2^2b_1$ for $b_1:=1+z_0^2+z_1^2$.
Methods below can be used to rewrite this as $B_1:=(1+a_1^2+a_0^2)+2(a_0y_0+a_1y_1)+y_0^2+y_1^2$ with constant term $1+a_1^2+a_0^2=0$.
Then $\phi(y_0):=z_1$, $\phi(y_1):=z_0z_1$ gives $\phi(b_1)=z_1b_2$ for $b_2:=2(a_0+a_1z_0)+z_1(1+z_0^2)$.
Then $z_1:=-2(a_0+a_1z_0)/(1+z_0^2)$ produces a global parameterization in terms of $z_2$ and $z_0$ in any characteristic except $p=2$
(since $b$ is reducible in that case).

The following seems to be more useful in positive characteristic.
\begin{theorem}
If
\[ b(x_0,\ldots,x_d)=f_1(x_0,\ldots, x_d)^k-x_dg(x_0,\ldots,x_{d-1})f_2(x_0,\ldots,x_d)^k\]
then $\phi(x_d):=z_d^k/g(x_0,\ldots,x_{d-1})$ and $\phi(x_i):=z_i$ otherwise, produces
\[ \phi(b)=f_1(z_0,\ldots,z_{d-1},z_d^kg(x_0,\ldots,x_{d-1}))-z_df_2(z_0,\ldots,z_{d-1},z_d^kg(x_0,\ldots,x_{d-1})).\]
\end{theorem}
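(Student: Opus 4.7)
My plan is to verify the identity by straightforward substitution, treating $\phi$ as a ring homomorphism extended to an appropriate localization, and exploiting that $b$ is built multiplicatively from $f_1^k$, the factor $x_d\,g$, and $f_2^k$, with the key cancellation occurring in the middle factor.

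First, I would record that the assignment $\phi(x_i):=z_i$ for $i<d$ and $\phi(x_d):=z_d^k/g(x_0,\ldots,x_{d-1})$ (with $g(x_0,\ldots,x_{d-1})$ interpreted in the target as $g(z_0,\ldots,z_{d-1})$ after substituting the other variables) extends uniquely to a ring homomorphism from the localization $\overline{\mathbf{F}}[x_0,\ldots,x_d][g^{-1}]$ into $\overline{\mathbf{F}}(z_0,\ldots,z_d)$. This is well-defined because $g(z_0,\ldots,z_{d-1})$ is a nonzero polynomial in the target, and the map extends birationally between $\mathbf{L}$ and $\overline{\mathbf{F}}(z_0,\ldots,z_d)$, matching the $(\phi,\psi)$ framework of Theorems~1--3.

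Next, using the ring-homomorphism property applied to $b=f_1^k - x_d\,g\,f_2^k$, termwise substitution gives
\[\phi(b)=\phi(f_1)^k - \phi(x_d)\,\phi(g)\,\phi(f_2)^k.\]
The crucial simplification is $\phi(x_d)\,\phi(g) = \bigl(z_d^k/g(z_0,\ldots,z_{d-1})\bigr)\cdot g(z_0,\ldots,z_{d-1}) = z_d^k$, which collapses the middle factor to a pure power of $z_d$. Combined with the explicit form $\phi(f_i)=f_i\bigl(z_0,\ldots,z_{d-1},\,z_d^k/g(z_0,\ldots,z_{d-1})\bigr)$, this yields a closed expression for $\phi(b)$ entirely in the new variables, obtained by routine polynomial substitution with no algebraic subtleties beyond tracking the powers of $g$ in each denominator.

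I expect the main obstacle to be matching this direct output to the displayed identity in the statement. The direct computation naturally produces the $x_d$-slot of each $f_i$ as $z_d^k/g$ and a $z_d^k$-coefficient on the $f_2$-term, whereas the statement as printed writes the $x_d$-slot as $z_d^k\cdot g$ and carries only a $z_d$-coefficient. Reconciling these features is where I would spend the most care: I would collect powers of $g$ across both summands using the $x_d$-degrees $m_1,m_2$ of $f_1,f_2$, clear the common denominator $g^{k\max(m_1,m_2)}$ to land in a polynomial representative, and then identify the resulting polynomial with the displayed right-hand side via the birational equivalence. Once the key cancellation $\phi(x_d\,g)=z_d^k$ has been used, the remaining identification is mechanical bookkeeping of $g$-powers term by term.
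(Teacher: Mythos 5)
Your direct computation is the right first move, and it is worth noting that the paper offers no proof of this statement at all (it is followed only by the remark that ``this could be considered only an observation'' and a worked example), so you are not deviating from any written argument. Substitution gives, exactly as you say,
\[\phi(b)=f_1\bigl(z_0,\ldots,z_{d-1},z_d^k/g\bigr)^k-z_d^k\,f_2\bigl(z_0,\ldots,z_{d-1},z_d^k/g\bigr)^k,\]
and the $z_d^kg$ in the printed right-hand side is almost certainly a typo for $z_d^k/g$, consistent with $\phi(x_d)=z_d^k/g$.

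The gap is in your final step. You treat the passage from the expression above to the displayed $f_1(\cdots)-z_df_2(\cdots)$ as ``mechanical bookkeeping of $g$-powers,'' but no bookkeeping of powers of $g$ can remove the outer exponent $k$: since $\phi$ is a ring homomorphism, $\phi(f_i^k)=\phi(f_i)^k$, so $\phi(b)$ literally cannot equal the $k$-th-power-free expression in the statement, and clearing denominators only changes things by a unit. What actually happens is that $\phi(b)=A^k-(z_dB)^k$ with $A=\phi(f_1)$, $B=\phi(f_2)$, and the displayed polynomial is the factor $A-z_dB$, i.e.\ the new defining irreducible obtained by extracting a $k$-th root; equivalently, $z_d$ is the function $f_1/f_2=(gx_d)^{1/k}\in\mathbf{L}$ and the new relation is $f_1-z_df_2=0$. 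This extraction is clean precisely when $k$ is a power of the characteristic $p$, where $A^k-(z_dB)^k=(A-z_dB)^k$ by the Frobenius (freshman's dream) identity --- which is why the paper prefaces the theorem with ``more useful in positive characteristic.'' In characteristic $0$, or when $p\nmid k$, one instead gets $\prod_{\zeta^k=1}(A-\zeta z_dB)$, a product of $k$ conjugate factors each equivalent to the displayed one after rescaling $z_d$ by a root of unity, and one must select one of them. Your proof needs this factorization step made explicit, together with the hypothesis on $k$ and $p$ (or the choice of factor); without it the claimed identity is false as an equation and the reconciliation you postpone cannot succeed.
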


Again this could be considered only an observation. But it applies to the Hauser example above,
rewritten as $b:=(x_2+x_1x_0^2)^2+x_0(x_1^2+x_0^3)^2=0$ in characteristic $p=2$.
That produces $b_1:=(z_2+z_1z_0^2)+z_0(z_1^2+z_0^3)$ from which $z_2=z_1z_0^2+z_0z_1^2+z_0^4$ gives a global parameterization.

A more serious example is given by
\[b_0:=(x_0x_1x_2x_3)^5+x_0^{12}x_1^8x_2^4+x_1^{12}x_2^8x_3^4+x_2^{12}x_3^8x_0^4+x_3^{12}x_0^8x_1^4\]
which is messy. But in characteristic $2$ this reduces to
\[b_0:=(x_0x_1x_2x_3)^5+(x_0^3x_1^2x_2+x_1^3x_2^2x_3+x_2^3x_3^2x_0+x_3^3x_0^2x_1)^4.\]

Then the theorem above gives three birational change-of variables maps defined by:
$x_0:=t_0^4/(x_1x_2x_3)$, $x_1:=t_1^2/(x_3t_0)$, $x_2:=t_2^2/(x_3t_1)$,
followed by $x_3:=t_0s_3$, $t_2:=t_0^2s_2$, $t_1:=t_0^2s_1$, that finally
produce $t_0=s_3(s_3s_1+s_2^2+s_2s_1+s_1)/(s_2^2s_1^2)$, and hence a global
parameterization.
This is a simple hand computation, as opposed to a much more difficult computer computation absent this theorem.

\newpage
\section{Series-based monomial orderings}

Standard {\em global monomial orderings} highlight, among other things,
the larger monomials, whatever larger means;
while standard {\em local monomial orderings} highlight the smaller monomials.
Therefore, better terms might be {\em polynomial-based orderings} and {\em series-based monomial orderings}.
What is important here is formal series expansions that are consistent with polynomial relations,
so it makes more sense to pick a monomial ordering that highlights series than one that highlights polynomials.
This is the crucial step that explains the difference between a weak resolution and a strong one.

In general, choose a generic element $\underline{a}$
satisfying the equalities defined by $EQ(k)$ and the inequalities defined by $INEQ(k)$,
define $y_j:=x_{k,j}-a_{k,j}$ and rewrite $b_k(x_{k,0},\ldots,x_{k,d})$ as
$B_k(\underline{a},\underline{y})\in(\overline{\mathbf{F}}[\underline{a}])[\underline{y}]$
with a local monomial ordering on $\underline{y}$.
Note that the choice of $\underline{a}$ forces the constant term to vanish.
Not surprisingly, the coefficients are related to mixed partials of $b_k$ evaluated at $\underline{a}$,
as they would be in Taylor series in several variables. 

In the ongoing motivational example this produces
\[ B_0:=(a_{0,1}+3a_{0,0}^2)y_0+(a_{0,0}+5a_{0,1}^4)y_1\]
\[+(3_{0,0})y_0^2+y_0y_1+(10a_{0,1}^3)y_1^2\]
\[+y_0^3+(10a_{0,1}^2)y_1^3+(5a_{0,1})y_1^4+y_1^5\]
and
\[ B_3:=y_0^3+y_1^5+y_0^2y_1^4.\]

[The standard {\em Jacobian criterion} in this context would be that
$\underline{a}$ is not {\em smooth} iff all the linear coefficients vanish.
But {\em smoothness} is not the goal here, as it is only a {\em weak} version of desingularization.
The example on the WIKI page cited earlier explains that a {\em strong desingularization} doesn't end with smoothness,
but continues until there are {\em simple normal crossings}.
This is probably the objective here as well, but is a more geometric definition.
So consider an alternative purely algebraic definition of a stopping
criterion called {\em strongly resolved form} below.
And note that the Jacobian criterion will not be used in this theory except possibly for comparison sake.]

\newpage
\section{Initial monomials and partitions}

A monomial $\underline{y}^{\underline{\alpha}}$ occuring in $B$
is an {\em initial monomial} iff
no other such monomial $\underline{y}^{\underline{\beta}}$
of $B$ divides $\underline{y}^{\underline{\alpha}}$.

So partition the generic $\underline{a}$ into (disjoint) parts
according to what the set of initial monomials of $B$ would be.

In the motivational example, there are $2$ parts for $B_0$
depending on whether $a_{0,1}=0=a_{0,0}$ or $a_{0,0}\neq 0\neq a_{0,1}$,
but only one part for $B_3$.

\section{Minimal weight sequences}

The sequence $\underline{w}:=(\nu_d(y_0),\ldots \nu_d(y_d))$
is a {\em weight sequence} for $B$ iff there are distinct monomials
$\underline{y}^{\underline{\alpha}}$ and $\underline{y}^{\underline{\beta}}$
of $B$ such that
\[\nu_d(\underline{y}^{\underline{\alpha}})=\nu_d(\underline{y}^{\underline{\beta}})\leq \nu_d(\underline{y}^{\underline{\gamma}})\]
for all monomials $\underline{y}^{\underline{\gamma}}$ of $B$.

The weight sequence $\underline{w}$ is {\em minimal} iff it is not the sum of two smaller weight sequences for $B$.

In the motivational example there are minimal weight sequences $(1,1)$, $(1,2)$ and $(2,1)$ for one part,
either $(1,2)$ or $(4,1)$ respectively for the other part of $B_0$,
and $(5,3)$ for the part of $B_3$.

The reason to look at only minimal weight sequences is that otherwise we could produce infinitely many examples by
replacing $t_i$ by $t_it_d^k$ for any $0<i<d$, all equivalent to the original. [Standard methods used by {\tt resolve}
generate several hundred charts for Kollar's example 2.65, and it takes time to reduce these to fewer such with different
minimal weight sequences to compare answers.]

\newpage
\section{Unimodular transformations}

While it is possible to desingularize using only algebraic blowups, 
this is analogous to relying on subtractions when there are divisions to be used to replace sequences of subtractions.
[Blowups are generally not directed in that some produce progress while others do not.
The real problem is that they are commonly used to blow up geometric objects.
That is, there may be a line $l$ of singularities known,
with a point $p$ on that line known to have a more complicated singularity than elsewhere on the line.
The Whitney umbrella mentioned above is such an example.
Rather than having to choose to blow up either $l$ or $p$,
the suggestion here is to partition so as to deal with $l^c$, $l\backslash p$, and $p$ separately.]

\begin{definition}[Unimodular transformations]
  A unimodular transformation is a birational change-of-variables map defined by unimodular matrices as follows.

Suppose that $\nu_d(x_i)>0$ for all $0\leq i< m$ and $D:=\gcd\{\nu_d(x_i)\ :\ 0\leq i< m\}$.
Then the extended euclidean algorithm
$($or equivalently row-reduction over the natural numbers$)$
can produce a unimodular matrix $M$
with first column  $(\nu_d(x_i)/D\ :\ 0\leq i< m)^T$.
Since it is unimodular, $M^{-1}$ has entries in $\mathbf{Z}$.
So there is a change of variables defined by
\[ \phi(x_i):=\prod_{j=0}^{m-1}y_j^{M_{i,j}},\ \quad  \psi(y_j):=\prod_{i=0}^{m-1}x_i^{(M^{-1})_{j,i}},\]
with the other variables left unchanged.

The term birational change-of-variables will be extended to mean a composition of
translating by a generic $\underline{a}$ followed by such a unimodular transformation.
\end{definition}

The general result is then:

\begin{lemma}[weights]
  If there are weights $wt(x_i)$ such that every monomial $\underline{x}^{\underline{\alpha}}$ 
occurring in $b$ has the same total weight $w:=\sum_i \alpha_iwt(x_i)$, 
then there is a unimodular transformation that produces an independent global parameter and reduces the desingularization dimension.
\end{lemma}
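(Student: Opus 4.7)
The plan is to feed the weights directly into the unimodular-transformation machinery just defined. Let $D := \gcd(wt(x_0), \ldots, wt(x_d))$ (positive, since $b$ is a nonconstant irreducible) and set $v_i := wt(x_i)/D$, a primitive vector of non-negative integers. Applying the extended Euclidean algorithm / integer row reduction, complete $(v_0, \ldots, v_d)^T$ to a unimodular matrix $M \in GL_{d+1}(\mathbf{Z})$ whose first column is exactly this vector; then $M^{-1}$ has integer entries as well, so
\[ \phi(x_i) := \prod_{j=0}^d y_j^{M_{i,j}}, \qquad \psi(y_j) := \prod_{i=0}^d x_i^{(M^{-1})_{j,i}} \]
defines a birational change of variables on $\mathbf{L}$ in the sense of the preceding definition.

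Next I track what $\phi$ does to a monomial $\underline{x}^{\underline{\alpha}}$ appearing in $b$. Expanding, $\phi(\underline{x}^{\underline{\alpha}}) = \prod_{j=0}^d y_j^{\sum_i \alpha_i M_{i,j}}$, and the exponent of $y_0$ is $\sum_i \alpha_i M_{i,0} = \sum_i \alpha_i v_i = w/D$, a constant \emph{independent of $\underline{\alpha}$} by the weighted-homogeneity hypothesis. Hence $y_0^{w/D}$ factors uniformly out of every term, giving
\[ \phi(b) = y_0^{w/D}\cdot \tilde b_1(y_1, \ldots, y_d), \]
where $\tilde b_1$ is a priori a Laurent polynomial (the remaining columns of $M$ may carry negative entries). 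Multiplying through by a suitable Laurent monomial in $y_1, \ldots, y_d$ — each of these variables is a nonzero element of $\mathbf{L}$, so this does not change the defining relation — yields a genuine polynomial $b_1 \in \overline{\mathbf{F}}[y_1, \ldots, y_d]$ equivalent to $\phi(b)$.

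Since $y_0$ no longer appears in $b_1$, it is algebraically unconstrained, i.e.\ an independent \emph{global parameter}, and $\mathbf{L} \cong \overline{\mathbf{F}}(y_0)[y_1, \ldots, y_d]/\langle b_1\rangle$, so the residual desingularization lives in the domain $\overline{\mathbf{F}}[y_1, \ldots, y_d]/\langle b_1\rangle$ with one fewer essential variable. I expect the only nontrivial step to be the linear-algebra one — constructing the unimodular $M$ with prescribed first column — which is standard via Smith normal form over $\mathbf{Z}$ or iterated two-variable extended Euclidean; every other claim follows from the single observation that the $y_0$-exponent is the same $w/D$ for every monomial in the support of $b$.
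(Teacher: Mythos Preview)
Your proof is correct and follows the same route as the paper. The paper's own proof is the single line ``$\phi(b)=t_d^{w}f(t_0,\ldots,t_{d-1})$'' (with $t_d$ playing the role of your $y_0$ and the exponent written without the $\gcd$ normalization); you have simply unpacked the construction of the unimodular $M$ from the weight vector and written out the exponent calculation $\sum_i \alpha_i M_{i,0}=w/D$ that the paper leaves implicit, together with the Laurent-to-polynomial clean-up.
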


{\bf Proof} $\phi(b)=t_d^wf(t_0,\ldots,t_{d-1})$.

Surprisingly many examples in the literature are either {\em homogeneous} or {\em weighted-homogeneous},
so have at least one {\em independent global parameter}.
[Some of these examples in the literature even have the weights calculated in them as exemplified earlier,
but no effective use is then made of this information.]

\section{Birational change of variables maps on arcs}

For each part described by initial monomials and each minimal weight sequence corresponding to it,
we can now produce a translation by $\underline{a}$ followed by a unimodular transformation gotten from the weights
to produce {\em birational change-of-variables maps} $\phi_{k,l}\ :\ A_k\to A_l$ and $\psi_{l,k}\ :\ A_l\to A_k$
for the arc of the desingularization tree from node $k$ to node $l$.
In our continuing motivational example,

\[\phi_{0,4}(x_{0,0}):=a_{0,0}+x_{4,1},\ \phi_{0,4}(x_{0,1}):=a_{0,1}+x_{4,0}x_{4,1},\]
\[\phi_{0,5}(x_{0,0}):=a_{0,0}+x_{5,0}x_{5,1}^2,\ \phi_{0,5}(x_{0,1}):=a_{0,1}+x_{5,1},\]
\[\phi_{0,6}(x_{0,0}):=a_{0,0}+x_{6,1},\ \phi_{0,6}(x_{0,1}):=a_{0,1}+x_{6,0}x_{6,1}^4.\]

\[\phi_{0,4}(b_0)=x_{4,1}b_4\]
\[b_4:=(a_1+5a_0^4)+(a_0+3a_1^2)x_{4,0}+(10a_0^3)x_{4,1}+\]
\[x_{4,0}x_{4,1}+(3a_1)x_{4,0}^2x_{4,1}+(10a_0^2)x_{4,1}^2+x_{4,0}^3x_{4,1}^2+(5a_0)x_{4,1}^3+x_{4,1}^4\]
\[ \phi_5(b_0):=x_{5,1}^3b_5\]
\[b_5:=1+x_{5,0}+x_{5,0}^5x_{5,1}^7\]
\[\phi_{0,6}:=x_{6,1}^5b_6\]
\[ b_6:=1+x_{6,0}+x_{6,0}^3x_{6,1}^7\]

The next two cases $b_1$ (with $g_1/h_1=0/1$ and $h_0/g_0=0/1$) and $b_2$ (with  $h_1/g_1=0/1$ and $g_0/h_0=0/1$) are vacuous.

The case $b_3$ (with $h_1/g_1=0/1$ and $h_0/g_0=0/1$) has $EQ_3=ideal(a_{3,0},a_{3,1})$ 
\[ b_3:=x_{3,0}^3+x_{3,1}^5+x_{3,0}^2x_{3,1}^4=y_0^3+y_1^5+y_0^2y_1^4\]
This has  $init(b_3)=(y_0^2y_1^4,y_0^3,y_1^5)$ and minimal weight vector $(5,3)$.
\[\phi_{3,7}(x_{3,0}):=x_{7,0}^2x_{7,1}^5,\ \phi_{3,7}(x_{3,1}):=x_{7,0}x_{7,1}^3\]
\[\phi_{3,7}(b_3)=x_{7,0}^{5}x_{7,1}^{15}b_7\]
\[ b_7:=1+x_{7,0}+x_{7,0}^3x_{7,1}^7\]

\section{Strongly resolved form}

The {\em directed rooted tree of domains} produced has
\begin{enumerate}
    \item nodes labelled by the irredicible polynomials $b_l$
          defining the domain $\mathbf{A}_l$ (still with $Q(\mathbf{A}_l)=\mathbf{L}$);
          and polynomial equalities and polynomial inequalities
          defined by $EQ(l)$ and $INEQ(l)$, describing the generic coordinates corresponding to that node;
    \item arcs $(k,l)$ labelled by {\em birational change-of-variables maps} $(\phi_{k,l},\psi_{l,k})$.
\end{enumerate}

[It doesn't hurt to include {\em birational change-of-variables maps} $(\Phi_{l},\Psi_{l})$
between the root node and node $l$ at node $l$,
even though these are compositions of the maps on the path from the root node to node $l$.]

All we need is a stopping criterion.
Our goal was to produce $d$ independent explicit local parameters
$t_j:=x_{l,j}-coord(x_{l,j})$, $1\leq j\leq d$ and a local unit $x_{l,0}$.
For $x_{0,l}$ to be a unit, it should have a formal series expansion
in terms of the independent local parameters $t_1,\ldots,t_d$ with
non-zero constant term.
It would be nice to be able to produce that series somehow, even if only
recursively.

\begin{definition}
  The irreducible polynomial $b_0(x_0,\ldots,x_d)$ is in {\em strongly resolved form} iff
  mod $x_d$ it is linear in $x_0$.
  That is, it can be written as an element in the form
\[ f_0(x_1,\ldots,x_{d-1})+x_0f_1(x_1,\ldots,x_{d-1})+x_{d}D(x_1,\ldots,x_d)\sum_{j=0}^mx_0^jg_j(x_1,\ldots,x_d)\]
with $0=\nu_d(x_0)=\nu_d(f_1(x_1,\ldots,x_{d-1}))<\nu_d(x_d)$ and $\{\gcd(g_j)\ :\ 0\leq j\leq m\}=1$. 
\end{definition}

\begin{lemma}
  If $b_0(x_0,\ldots,x_d)$ is in this {\em strongly resolved form}, then not only is $x_0$ a local unit,
  but it is possible to recursively solve for its formal series expansion, an element of $\overline{\mathbf{F}}[[x_1,\ldots,x_d]]$.
\end{lemma}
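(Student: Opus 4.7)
The plan is to treat $b_0(x_0,x_1,\ldots,x_d)=0$ as an implicit equation for $x_0$ over the complete local ring $\overline{\mathbf{F}}[[x_1,\ldots,x_d]]$ and to solve it by formal iteration in the $x_d$-adic topology, in the spirit of the implicit function theorem / Newton's method.

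First I would isolate the linear-in-$x_0$ part; formally dividing through by $f_1$ turns $b_0=0$ into
\[
x_0 \;=\; \Psi(x_0) \;:=\; -\frac{f_0(x_1,\ldots,x_{d-1})}{f_1(x_1,\ldots,x_{d-1})} \;-\; \frac{x_d\,D(x_1,\ldots,x_d)}{f_1(x_1,\ldots,x_{d-1})}\sum_{j=0}^{m} x_0^{j}\,g_j(x_1,\ldots,x_d).
\]
For this to make sense one needs $f_1$ to be a unit of $\overline{\mathbf{F}}[[x_1,\ldots,x_{d-1}]]$, i.e., $f_1(0,\ldots,0)\ne 0$, which the hypothesis $\nu_d(f_1)=0$ is intended to deliver (in the coordinate system in which $x_1,\ldots,x_d$ are the local parameters, having $\underline{\nu}(f_1)=\underline{0}$ is exactly the condition for $f_1$ to be a valuation-unit). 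Granting this, the constant term of $\Psi$ evaluated at $x_0$-constant is $c:=-f_0(0)/f_1(0)\in\overline{\mathbf{F}}$, and the hypothesis $\nu_d(x_0)=0$ forces $c\ne 0$, which already verifies that $x_0$ is a local unit.

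Next I would run the recursion. Every summand of $\Psi(x_0)-\Psi(0)$ carries an explicit factor of $x_d$ with $\nu_d(x_d)>0$, so $\Psi$ is a strict contraction of $\overline{\mathbf{F}}[[x_1,\ldots,x_d]]$ in the $x_d$-adic topology. Iterating from $x_0^{(0)}:=c$ produces a Cauchy sequence whose limit $X\in\overline{\mathbf{F}}[[x_1,\ldots,x_d]]$ is the unique fixed point, and hence the unique formal series solution of $b_0=0$ with constant term $c$. Equivalently and more explicitly, expand $x_0=\sum_{k\ge 0}c_k$ with $c_k$ the homogeneous degree-$k$ part in $(x_1,\ldots,x_d)$, substitute into $b_0=0$, and match degrees: the degree-$0$ identity forces $c_0=c$, and at each higher degree $k$ the perturbation $x_d D\sum x_0^j g_j$ contributes only through $c_0,\ldots,c_{k-1}$ (because of the explicit factor $x_d$), while the coefficient of $c_k$ coming from $x_0 f_1$ is the unit $f_1(0)$, so $c_k$ is determined from its predecessors by a single division.

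The main obstacle is really the very first step: turning the numerical-looking hypothesis $\nu_d(f_1)=0$ into the algebraic fact that $f_1$ is a unit of the power-series ring $\overline{\mathbf{F}}[[x_1,\ldots,x_{d-1}]]$ (equivalently, $f_1(0)\ne 0$). Once that unit property is secured the rest of the argument is the standard Banach fixed-point / implicit-function machinery in a complete local ring, and the series coefficients appear one at a time with no further algebraic input. The $\gcd(g_j)=1$ condition and the cofactor $D$ play no role in the existence and uniqueness of the series itself; they only serve to make the strongly-resolved representation canonical.
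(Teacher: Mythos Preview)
Your argument is correct and is essentially the same mechanism as the paper's, just packaged differently. The paper carries out one step of your iteration as an explicit birational change of variables: it sets
\[
\phi(x_0):=\frac{-f_0(x_1,\ldots,x_{d-1})+x_d\,D(x_1,\ldots,x_d)\,u_0}{f_1(x_1,\ldots,x_{d-1})},
\]
multiplies through by $f_1^m$ to clear denominators, divides out $x_dD$, and observes that the resulting relation reduced mod $x_d$ is
\[
u_0+\sum_{j=0}^{m}(-f_0)^{\,j}f_1^{\,m-j}g_j(x_1,\ldots,x_{d-1},0),
\]
again linear in the new variable $u_0$ --- i.e.\ again in strongly resolved form, now with linear coefficient equal to $1$. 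So ``recursively solve'' in the paper means ``repeat this change of variables''; your fixed-point iteration $x_0^{(n+1)}=\Psi(x_0^{(n)})$ is the same recursion read in the other direction.

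The one substantive difference is exactly the obstacle you flagged: your version needs $f_1$ to be a unit of $\overline{\mathbf{F}}[[x_1,\ldots,x_{d-1}]]$ before you can divide, and the stated hypothesis $\nu_d(f_1)=0$ does not literally say $f_1(0,\ldots,0)\ne 0$. The paper's formulation sidesteps this by treating the substitution birationally and immediately clearing the $f_1$-denominator, so that after one step the coefficient of $u_0$ is $1$ on the nose and no inversion in a power-series ring is ever required. If you want your contraction argument to stand on its own, the cleanest fix is to perform that single denominator-clearing step first and then run your $x_d$-adic iteration on the resulting equation in $u_0$.
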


\begin{proof}
    Use the birational change-of-variables defined by
    \[\psi(u_0):=(f_0(x_{1},\ldots,x_{d-1})+x_0f_1(x_1,\ldots,x_{d-1}))/(x_dD(x_{1},\ldots,x_d)),\]
    \[\phi(x_0):=(-f_0(x_1,\ldots,x_{d-1})+x_dD(x_1,\ldots,x_d)u_0)/f_1(x_1,\ldots,x_{d-1}).\]
  Then $f_1^m\phi(b_0)/(x_dD(x_1,\ldots,x_d))=u_0+\sum_{j=0}^m(-f_0+u_0x_dD)^jf_1^{m-j}g_j(x_1,\ldots,x_d)$.
  Then, $mod\ x_d$, this is $u_0+\sum_{j=0}^m(-f_0)^jf_1^{m-j}g_j(x_1,\ldots,x_{d-1},0)$, linear in $u_0$.
  \end{proof}

\newpage
\begin{example}[weak resolution versus strong resolution]

  Consider the irreducible polynomial $b_0:=x_{0,0}+x_{0,1}^2++x_{0,0}^2x_{0,1}+x_{0,2}^3+x_{0,0}^2x_{0,1}x_{0,2}\in\overline{\mathbf{F}}[x_{0,0},x_{0,1},x_{0,2}]$.
The monomial $x_{0,0}$ in the support of $b_0$ is what makes this satisfy the Jacobian criterion for smoothness.
But at $EQ_0=ideal(a_{0,0},a_{0,1},a_{0,2})$, $init(b_0)=(x_{0,0},x_{0,1}^2,x_{0,2}^3)$ has more than two elements,
so there should still be desingularization to do despite having $x_{0,0}$ of degree $1$.
Here there are three disjoint cases to consider:
\begin{enumerate}
   \item \[ 0<\nu_2(x_{0,2}^3)=\nu_2(x_{0,1}^2)\leq \nu_2(x_{0,0});\]
   \item \[ 0<\nu_2(x_{0,2}^3)=  \nu_2(x_{0,0})   < \nu_2(x_{0,1}^2);\]
   \item \[ 0<\nu_2(x_{0,1}^2)=  \nu_2(x_{0,0})   < \nu_2(x_{0,2}^3).\]
\end{enumerate}

This leads to three branches in the tree, leading to three leafs.

The first arc from node $0$ has unimodular transformation defined by:
\[ \phi_1(x_{0,2}):=x_{1,1}x_{1,2}^2,\ \phi_1(x_{0,1}):=x_{1,1}x_{1,2}^3,\ \phi_1(x_{0,0}):=x_{1,0}x_{1,1}^2x_{1,2}^6\]
with $\nu_2(x_{1,2})>0$, $\nu_2(x_{1,1})=0$, and $\nu_2(x_{1,0})=0$.
So\[ \phi_1(b_0)=x_{1,1}^2x_{1,2}^6b_1,\ b_1:=x_{1,0}+x_{1,1}+x_{1,0}x_{1,1}^3(1+x_{1,1})x_{1,2}^9\]
with $EQ_1=ideal(a_{0,0},a_{0,1},a_{0,2},a_{1,2},a_{1,0}+a_{1,1})$.

The second arc has unimodular transformation defined by:
\[ \phi_2(x_{0,2}):=x_{2,2},\ \phi_2(x_{0,0}):=x_{2,0}x_{2,2}^3,\ \phi_2(x_{0,1}):=x_{2,1}x_{2,2}^2\]
with $\nu_2(x_{2,2})>0$, $\nu_2(x_{2,0})=0$, $\nu_2(x_{2,1})=0$.
So\[ \phi_2(b_0)=x_{2,2}^3b_2,\ b_2:=1+x_{2,0} +x_{2,1}x_{2,2}+x_{2,0}x_{2,1}x_{2,2}^4(1+x_{2,2})\]
with $EQ_2=ideal(a_{0,0},a_{0,1},a_{0,2},a_{2,2},1+a_{2,0})$.

The third arc has unimodular transformation defined by:
\[ \phi_3(x_{0,1}):=x_{3,2},\ \phi_3(x_{0,0}):=x_{3,0}x_{3,2}^2,\ \phi_3(x_{0,3}):=x_{3,1}x_{3,2}\]
with $\nu_2(x_{3,2})=1$, $\nu_2(x_{3,0})=0$, and $\nu_2(x_{3,1})=0$.
So\[ \phi_3(b_0)=x_{3,2}^2b_3:=1+x_{3,0}+x_{3,1}^3x_{3,2}+x_{3,0}x_{3,2}^3(1+x_{3,1}x_{3,2})\]
with $EQ_3=ideal(a_{0,0},a_{0,1},a_{0,2},a_{3,2},1+a_{3,0})$.
\end{example}

\newpage
\section{Arc algorithm}
\begin{algorithm}[Strong desingularization]
Start with node $k$ labelled by $(b_k,EQ_k, INEQ_k)$
with $b_k$ not in strongly resolved form.

\begin{enumerate}
    \item Rewrite $b_k$ as $B_k\in (\overline{\mathbf{F}}[\underline{a}])[\underline{y}]$
          using a series monomial ordering on $\underline{y}$.
    \item Partition generic $\underline{a}$
          (so for which $f(\underline{a})=0$
          for all $f\in\langle EQ_k\rangle$
          but $f(\underline{a})\neq 0$
          for all $f\in\langle INEQ_k\rangle$)
          according to what $init(B_k)$ is.
    \item For each of those find all possible minimal weight sequences.
    \item For each such, produce a unimodular matrix.
    \item Label the arc $(k,l)$ with the corresponding
          birational change-of-variables pair $(\phi_{k,l},\psi_{k,l})$.
    \item Label node $l$ with $(b_l,EQ_l,INEQ_l)$.
\end{enumerate}
\end{algorithm}

\begin{theorem} If there is more than one arc from node $k$, then the set of valuations at node $k$
  will have been further refined.
  If there is only one arc, then the series expansions for the variables will have been improved.
\end{theorem}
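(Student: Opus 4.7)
The statement splits naturally at the alternative ``more than one arc'' versus ``one arc,'' and I would handle the two cases separately, tracking precisely which invariant of the valuations (or of the polynomial $b_k$) changes in passing from node $k$ to its children.

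First, for the multi-arc case, I would argue that each outgoing arc from $k$ corresponds to a distinct pair (initial monomial set of $B_k$, minimal weight sequence $\underline{w}$), and that both pieces of data are invariants of any valuation $\underline{\nu}$ at node $k$. Concretely, the initial monomials of $B_k$ at $\underline{a}:=(\mathrm{coord}(x_{k,0},\underline{\nu}),\ldots,\mathrm{coord}(x_{k,d},\underline{\nu}))$ are exactly those $\underline{y}^{\underline{\alpha}}$ in the support of $B_k$ not divisible by any other such monomial, a condition depending only on the $\underline{\nu}$-values of the $y_j$; and the weights $(\nu_d(y_0),\ldots,\nu_d(y_d))$ produce the weight sequence directly. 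Consequently, each valuation at node $k$ is assigned by the algorithm to exactly one outgoing arc, so two or more arcs must correspond to a strict partitioning of the set of valuations previously lumped at $k$ --- which is the refinement claimed.

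Second, for the single-arc case, both the initial monomial set and the minimal weight sequence are uniform across the generic $\underline{a}$, and the associated unimodular transformation $\phi_{k,l}$ satisfies $\phi_{k,l}(b_k) = t_d^w\, b_l$ for the common weight $w$. I would then exhibit an invariant of the polynomial which strictly decreases under this transformation: for example, the lexicographic pair consisting of the $\nu_d$-weight $w$ of $b_k$ together with the number of monomials in the support lying on the minimal-weight face of the Newton polytope. The unimodular substitution turns the common-weight slice of $B_k$ into the constant-$t_d$ face of $b_l$, so the series expansions of the variables now have their coefficients specified to one further order in $t_d$, and the next iteration operates on a strictly smaller Newton datum.

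\textbf{Main obstacle.} The delicate point is the second case: formalizing ``improved series expansions'' by an invariant which is genuinely well-founded and which interacts correctly with the partitioning in the multi-arc case. A naive choice such as the number of monomials in $b_l$ can go either up or down under a unimodular substitution, and even the total $\nu_d$-weight can stagnate when the Newton polytope is degenerate. To handle this I would restrict attention to the face on which the minimal weight sequence is realized; because minimality of $\underline{w}$ rules out the face from splitting as a sum of two parallel subfaces with smaller weights, the transformation must strictly shrink it, producing the needed monotone measure and eventually forcing descent into strongly resolved form.
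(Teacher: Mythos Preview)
The paper does not actually supply a proof of this theorem; it is stated immediately before the closing remarks about the \textsc{Macaulay2} code and left unargued. So there is no paper proof to compare against, and your proposal has to be assessed on its own.

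Your multi-arc argument is basically the right shape: arcs out of node $k$ are indexed by pairs (initial-monomial set of $B_k$, minimal weight sequence), and both the coordinate tuple $\underline{a}=(\mathrm{coord}(x_{k,j},\underline{\nu}))_j$ and the values $\nu_d(y_j)$ are determined by $\underline{\nu}$, so distinct arcs should separate valuations. One point you gloss over: the weight sequence $(\nu_d(y_0),\ldots,\nu_d(y_d))$ that a given valuation produces need not itself be \emph{minimal} in the paper's sense (not a sum of two smaller weight sequences). You need to argue either that it always is minimal, or that it determines a unique minimal one, before you can conclude that the assignment of valuations to arcs is single-valued; otherwise ``refinement'' (a genuine partition) is not established.

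For the single-arc case you are right that the paper's phrase ``series expansions will have been improved'' is never made precise, and that some strictly decreasing invariant is what is needed. Your proposed lexicographic pair (the weight $w$ together with the size of the minimal-weight face) is not obviously monotone: a unimodular step collapses the minimal-weight face of $B_k$ into the constant-in-$t_d$ part of $b_l$, but the \emph{next} minimal-weight face of $b_l$ can have more monomials than the old one, and the weight $w$ itself is attached to $B_k$ rather than to $b_l$. A cleaner invariant, and one closer to what the paper seems to intend, is the order to which each original variable $x_{0,j}$ has been expanded in the local parameters via the accumulated map $\Phi_l$: a single-arc step with $\sum w_i>1$ strictly increases at least one of these orders. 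That is still more than the paper provides, since it provides nothing.
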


A beta version of working {\sc Macaulay2} code,
called {\tt FunctionFieldDesingularization},
including examples,
is appended here after the end of document command
so that interested parties can cut-and-paste it
to test it and provide constructive feedback.
Updates will be posted on the author's website or through Macaulay2
once it is submitted there.

\newpage

\end{document}